\newtheorem{theorem}{Theorem}[section]
\newtheorem{definition}[theorem]{Definition}
\newtheorem{remark}[theorem]{Remark}
\numberwithin{equation}{section}
\title{Decompositions of Scherk-Type Zero Mean Curvature Surfaces}
\author{Subham Paul}
\author{Priyank Vasu}
\author{Siddharth Panigrahi}
\author{Rahul Kumar Singh}
\address{Department of Mathematics, Indian Institute of Technology Patna, Bihta, Patna-801106, Bihar, India}
\thanks{Subham Paul: \href{mailto:subham_2021ma25@iitp.ac.in}{subham\_2021ma25@iitp.ac.in}}
\thanks{Priyank Vasu: \href{mailto:priyank_2121ma16@iitp.ac.in}{priyank\_2121ma16@iitp.ac.in}}
\thanks{Siddharth Panigrahi: \href{mailto:siddharth_2121ma12@iitp.ac.in}{siddharth\_2121ma12@iitp.ac.in}}
\thanks{Rahul Kumar Singh: \href{mailto:rahulks@iitp.ac.in}{rahulks@iitp.ac.in}}
\subjclass[2010]{53A10, 53B30, 11Z05}
\keywords{maximal surface; zero mean curvature surface; Scherk-type surface; Born-Infeld equation; Wick rotation; weakly untrapped surface;  $\Asterisk$-surface}
\begin{document}

\maketitle

\begin{abstract}

In this paper, by using a special Euler-Ramanujan identity and the idea of Wick rotation, we show that a one-parameter family of solutions to the zero mean curvature equation in Lorentz-Minkowski $3$-space $\mathbb E_1^3$, namely Scherk-type zero mean curvature surfaces, can be expressed as an infinite superposition of dilated helicoids. Further, we also obtain different finite decompositions for these surfaces. We end this paper with an application of these decompositions to formulate maximal codimension 2 surfaces into finite and infinite \enquote{sums} of weakly untrapped and $\Asterisk$-surfaces in Lorentz-Minkowski 4-space.

\end{abstract}

\section{Introduction}
 A surface $S$ in the Euclidean space $\mathbb{E}^3:= (\mathbb{R}^3,dx^2+dy^2+dz^2)$ is minimal if locally it can be expressed as a graph of some smooth function $ f(x,y)$ which satisfies the following partial differential equation:
	\begin{equation}\label{mse}
(1+f_{x}^2)f_{yy}-2f_{x}f_{y}f_{xy}+
(1+f_{y}^2)f_{xx}=0.
\end{equation}
The above equation is called the \textit{minimal surface equation} (this is also equivalent to the vanishing of the mean curvature function of $S$ in $\mathbb E^3$), the solution $z=f(x,y)$ of the equation \eqref{mse} is called the height function of the minimal surface $S$. 
Analogously, the height function \( g(x,y) \) of a zero mean curvature surface in the Lorentz-Minkowski 3-space $\mathbb{E}_1^3 := (\mathbb{R}^3, dx^2 + dy^2 - dz^2)$, when defined over a domain in the spacelike \( xy \)-plane, satisfies the following partial differential equation:
\begin{equation}\label{MaSE}
(1 - g_x^2)\, g_{yy} + 2 g_x g_y\, g_{xy} + (1 - g_y^2)\, g_{xx} = 0,
\end{equation}
We will call it the \textit{zero mean curvature equation} (ZMC equation). Moreover, if $g(x,y)$ satisfies $g_x^2+g_y^2<1$ on its domain, then the surface induced by $g$ is spacelike and the equation \eqref{MaSE} is also called the \textit{maximal surface equation}.   
Similarly, any solution \( h(y,z) \) to the equation
\begin{equation}\label{BIE}
(1 + h_y^2)\, h_{zz} - 2 h_y h_z\, h_{yz} + (h_z^2 - 1)\, h_{yy} = 0,
\end{equation}
yields the height function of a zero mean curvature surface in \( \mathbb{E}_1^3 \), defined over a domain in the timelike \( yz \)-plane. The equation \eqref{BIE} is also known as the \textit{Born-Infeld equation}, and the solutions to this equation are called \textit{Born-Infeld solitons} (see \cite{dey_bornInfeld}) 

It is not very hard to verify that the equations \eqref{mse}, \eqref{MaSE}, and \eqref{BIE} are interconnected through \textit{Wick rotations}, where a real variable (e.g., \( t \)) is replaced by an imaginary one (e.g., \( t \mapsto i t \)). The technique of changing a real variable to an imaginary variable is known as Wick rotation. In past, many authors have applied this technique to study the transformations of the solutions to these zero mean curvature equations. For more details on this, see \cite{akamine_wick, dey_bornInfeld, MALLORY20141669}.

It is a well-established fact now that there exists a connection between the height function of some special minimal surfaces and specific Euler-Ramanujan's identities;
the first such connection appears in a paper (see \cite{kamien2001decomposition}) by the physicist R. Kamien, where he uses a particular Euler-Ramanujan's identity to show that the Scherk's surface (a minimal surface) can be decomposed into an infinite superposition of  ``dilated helicoids".
Another connection between Euler-Ramanujan identities and zero mean curvature surfaces also appears in the work of R.~Dey and her collaborators (see \cite{dey_bornInfeld, dey_ramanujan}) where the authors use the Weierstrass–Enneper representation of certain classes of minimal and maximal surfaces in conjunction with specific Euler–Ramanujan identities to derive new complex identities. R.~Kamien, in his paper \cite{kamien2001decomposition}, suggested a possible decomposition of the solutions to equation~\eqref{BIE} which are related to the solutions of \eqref{mse} through Wick rotation. However, a key difficulty arises in this context: after Wick rotation, the solutions to~\eqref{BIE} typically become complex-valued, making them unsuitable as real-valued height functions. In a subsequent study, the fourth author with S.~Akamine, in \cite{akamine_wick}, investigated the conditions under which real (purely imaginary) solutions of all three ZMC equations mentioned above persist after Wick rotations. They showed that these conditions can be characterized by certain symmetries of the solutions.

 In this paper, we perform a Wick rotation of the Scherk's minimal surfaces presented in \cite{kamien2001decomposition}, and obtain real solutions to both the maximal surface equation and the Born-Infeld equation. We then decompose these surfaces using the Euler-Ramanujan identity as in \cite{berndt2012ramanujan}, resulting in infinite
decompositions into dilated helicoids and dilated hyperbolic helicoids, respectively. Furthermore, we establish finite decompositions of the aforementioned surfaces into dilated versions of the same surfaces. These results provide partial answers to the questions posed by R.~Kamien in \cite{kamien2001decomposition}. Lastly as an application of these decompositions, we express spacelike maximal surfaces in $\mathbb{E}_1^4$ as finite and infinite \enquote{sums} of weakly untrapped and $\Asterisk$-surfaces. The question of obtaining similar decompositions of general solutions to the ZMC equations $\eqref{mse}$, $\eqref{MaSE}$, and $\eqref{BIE}$ remains to be explored.

This paper is organized as follows. Section~\ref{Preliminaries} provides the necessary background, including fundamental definitions and results. In Section~\ref{Infinite decomposition}, we establish the infinite decomposition of Scherk's type ZMC surfaces in $\mathbb E_1^3$. Section~\ref{Finite decomposition} is devoted to the corresponding finite decomposition results. Finally, in Section~\ref{appl}, we discuss the aforementioned application of these decompositions.

       \section{Preliminaries}\label{Preliminaries}

Let \( \mathbb{E}^3  \) denote $\mathbb{R}^3$, endowed with the metric \( \mathrm{d}x^2 + \mathrm{d}y^2 + \mathrm{d}z^2 \) and the Lorentz-Minkowski 3-space, denoted by \( \mathbb{E}^3_1  \), is \( \mathbb{R}^3 \) equipped with the Lorentzian metric \( \mathrm{d}x^2 + \mathrm{d}y^2 - \mathrm{d}z^2 \). In both spaces, \( (x, y, z) \) are the canonical coordinates of \( \mathbb{R}^3 \). Throughout this paper, all real-valued functions are assumed to be real analytic, and \( 'i' \) denotes the imaginary unit, that is, \( i = \sqrt{-1} \).

 An immersion $X: \Omega \rightarrow \mathbb{E}^3_1$ of a domain $\Omega \subset \mathbb{R}^2$ into $\mathbb{E}^3_1$ is called \textit{spacelike (resp. timelike, lightlike)} at a point $p \in \Omega$ if its first fundamental form $\mathrm{I}=X^*\langle,\rangle$ is \textit{Riemannian (resp. Lorentzian, degenerate)} at $p$. A spacelike zero mean curvature surface is known as a \emph{maximal surface}, whereas a timelike one is referred to as a \emph{timelike minimal surface}.

A surface in \( \mathbb{E}^3 \) defined as the graph of a function \( f: \Omega \subset \mathbb{R}^2 \to \mathbb{R} \) on the \( xy \)-plane takes the form
\[
F(x, y) = (x, y, f(x, y)),
\]
and is minimal if \( f \) satisfies the minimal surface equation~\eqref{mse}.

In \( \mathbb{E}^3_1 \), a graph on the spacelike \( xy \)-plane is expressed as
\[
G(x, y) = (x, y, g(x, y)),
\]
and is a zero mean curvature surface if \( g \) satisfies the zero mean curvature equation~\eqref{MaSE}. In addition, the surface becomes spacelike if \( g_x^2 + g_y^2 < 1 \), making it a maximal surface, and timelike if \( g_x^2 + g_y^2 > 1 \), which makes it a timelike minimal surface and it degenerates when $g_x^2 + g_y^2=1$.
Similarly, a graph on the timelike \( yz \)-plane is written as
\[
H(y, z) = (h(y, z), y, z),
\]
and is a zero mean curvature surface if  \( h \) satisfies the Born-Infeld equation~\eqref{BIE}. It defines a maximal surface when \( 1 + h_y^2 - h_z^2 < 0 \), and a timelike minimal surface when \( 1 + h_y^2 - h_z^2 > 0 \).

 The relation between these graphs has been studied by the fourth author with his collaborators in \cite{akamine_wick,dey_bornInfeld}, where it was shown that they are connected through Wick rotations. For instance, consider the following example:

 \begin{equation}\label{Scherk minimal surface}
 z = f(x, y) = \log \left( \frac{\cos x}{\cos y} \right),
 \end{equation}
 is a well-known example of a solution to the minimal surface equation \eqref{mse}, called the \textit{classical Scherk's minimal surface}. Now, if we replace $x$ and $y$ with $ix$ and $iy$ in \eqref{Scherk minimal surface}, we get the function

\begin{equation}
z = g(x, y) = f(ix,iy)= \log \left( \frac{\cosh x}{\cosh y}\right),\end{equation}
 which solves the maximal surface equation \eqref{MaSE} in $\mathbb{E}^3_1$. On a similar note, if we replace only $y$ with $iy$ in \eqref{Scherk minimal surface} and redefine the coordinates as $(x, y, z) \mapsto ({y}, {z}, {x})$, we obtain the following solution to the Born–Infeld equation \eqref{BIE} in $\mathbb{E}^3_1$ as follows:  

\begin{equation}
{x} = h({y}, {z}) =f(y,iz)= \log \left( \frac{\cos {y}}{\cosh {z}} \right).
\end{equation}

  These are not merely ad hoc examples; one can verify that substituting \( x \mapsto i x \) and \( y \mapsto i y \) in a solution of the minimal surface equation~\eqref{mse} produces a solution of the maximal surface equation~\eqref{MaSE}, and vice versa.
 Similarly, if $f(x,y)$ is a solution to the minimal surface equation in $\mathbb E^3$, then $f(y,iz)$ is a solution to the Born-Infeld equation $\eqref{BIE}$ in $\mathbb E_1^3$. However, the resulting solutions are complex-valued in general. For example, consider the height function of \textit{helicoid}:
\begin{equation}\label{minimal helicoid}
    \xi(x, y) = \tan^{-1}\left(\frac{y}{x}\right).
\end{equation}
This function \( \xi(x, y) \) satisfies both the minimal surface equation~\eqref{mse} and the maximal surface equation~\eqref{MaSE}.

Now, applying Wick rotation, we obtain
\begin{equation}\label{complex wick helicoid}
    \varphi(y, z) = \xi(y,iz)= i\tanh^{-1}\left(\frac{z}{y}\right),
\end{equation}
which defines a complex-valued solution to the Born-Infeld equation~\eqref{BIE}. We refer to the surface described by~\eqref{complex wick helicoid} as \emph{Wick helicoid}.
 The fourth author, together with S. Akamine in \cite{akamine_wick} gives the criteria for the existence of real and imaginary solutions after Wick
rotations as follows:

\subsection{Transformation between solutions of \eqref{mse} and \eqref{MaSE}
}

\begin{theorem}\label{minimal to maximal}\cite{akamine_wick}
    Let $f(x, y)$ be a solution to \eqref{mse} in $\mathbb{E}^3$. If $f$ is even (resp. odd) with respect to the $x$-and $y$-variables, then its Wick rotation $g(x, y)=f(i x, i y)$ (resp. $g(x, y)=-f(i x, i y))$ is a real solution to \eqref{MaSE}   in $\mathbb{E}^3_1$ which is spacelike at least near the origin $o=(0,0)$ and conversely.
\end{theorem}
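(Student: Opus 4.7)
\medskip
\noindent\textbf{Proof plan.} My approach is based on a direct calculation combined with a power-series argument for reality and symmetry. The plan has four steps.

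\medskip
\noindent\textbf{Step 1 (Formal PDE computation).} I would first verify that the Wick-rotated function $g(x,y)=f(ix,iy)$, viewed as a (possibly complex) analytic function, satisfies the equation \eqref{MaSE} whenever $f$ satisfies \eqref{mse}. A direct chain-rule computation gives $g_x=i f_x(ix,iy)$, $g_y=i f_y(ix,iy)$, $g_{xx}=-f_{xx}(ix,iy)$, $g_{xy}=-f_{xy}(ix,iy)$, $g_{yy}=-f_{yy}(ix,iy)$, so that $g_x^2=-f_x^2$, $g_y^2=-f_y^2$, and $g_xg_y=-f_xf_y$. Plugging these into the left-hand side of \eqref{MaSE} yields $-\bigl[(1+f_x^2)f_{yy}-2f_xf_yf_{xy}+(1+f_y^2)f_{xx}\bigr]$, which vanishes by \eqref{mse}. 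Since \eqref{MaSE} is invariant under $g\mapsto -g$ (verified directly), the same holds for $-f(ix,iy)$.

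\medskip
\noindent\textbf{Step 2 (Reality via power series).} Because $f$ is real analytic, write $f(x,y)=\sum_{m,n\geq 0} a_{mn}x^my^n$ on a neighbourhood of the origin. Evenness in each variable forces $a_{mn}=0$ unless both $m$ and $n$ are even, so that
\[
f(ix,iy)=\sum_{m,n} a_{2m,\,2n}(-1)^{m+n}x^{2m}y^{2n}\in\mathbb{R}.
\]
Oddness in each variable forces $m,n$ odd, and a similar computation yields $f(ix,iy)=-\sum a_{2m+1,2n+1}(-1)^{m+n}x^{2m+1}y^{2n+1}$, which is real; here the sign convention $g=-f(ix,iy)$ simply makes the leading real coefficient match that of $f$. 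Thus $g$ is a real analytic solution of \eqref{MaSE} near the origin.

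\medskip
\noindent\textbf{Step 3 (Spacelike near origin).} Evenness (resp.\ oddness) in each variable implies that the Taylor expansion of $f$ contains only monomials $x^my^n$ with both exponents even (resp.\ both odd). In either case, $f_x$ and $f_y$ vanish at the origin, hence $g_x(0,0)=g_y(0,0)=0$. By continuity, $g_x^2+g_y^2<1$ in a neighbourhood of the origin, so the graph of $g$ is spacelike there.

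\medskip
\noindent\textbf{Step 4 (Converse).} For the converse direction, given a real analytic solution $g$ of \eqref{MaSE} that is even (resp.\ odd) with respect to $x$ and $y$, define $f(x,y)=g(-ix,-iy)$ (resp.\ $f(x,y)=-g(-ix,-iy)$). The same chain-rule identities as in Step 1, read in the opposite direction, show $f$ satisfies \eqref{mse}; the power-series argument of Step 2 shows $f$ is real; and evenness/oddness of $g$ transfers to $f$ tautologically since $(ix)^m(iy)^n$ has the same parity pattern as $x^my^n$.

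\medskip
\noindent\textbf{Main obstacle.} The algebra of Step 1 is mechanical, and Step 3 is immediate once the symmetry of $f$ is translated into the vanishing of the first derivatives at the origin. The genuinely delicate point is Step 2: one must justify the interchange of Wick rotation with the power-series expansion on a genuine neighbourhood of the origin. This relies on the real analyticity assumption (stated in the Preliminaries) and the fact that the radius of convergence of $f$ as a complex analytic function controls the domain on which $f(ix,iy)$ makes sense. Once this is clearly stated, the reality of $g$ follows cleanly from the parity of the exponents, and the sign convention in the odd case becomes transparent.
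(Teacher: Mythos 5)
Your proposal is correct. Note that the paper itself states this theorem without proof, citing \cite{akamine_wick}; the argument there is essentially the one you give — a direct chain-rule verification that the Wick rotation intertwines \eqref{mse} and \eqref{MaSE}, together with the real-analyticity/parity-of-exponents argument for reality of $g$ and the vanishing of $g_x, g_y$ at the origin for spacelikeness — so your route matches the standard one and all four steps check out.
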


\subsection{Transformations between solutions of \eqref{mse} and \eqref{BIE}}

\begin{theorem}\footnote{Common variable names are used in the definitions of related functions for notational convenience, even though their interpretation vary depending on the context.}\label{even minimal to born infi}\cite{akamine_wick}
    Let $f(x, y)$ be a solution to \eqref{mse} in $\mathbb{E}^3$, which is even with respect to the $y$-variable, then $h(y,z)=$ $f(y, iz)$ is a real solution to \eqref{BIE} in $\mathbb{E}^3_1$. Conversely, for a solution $h(y,z)$ to \eqref{BIE} in $\mathbb{E}^3_1$, which is even with respect to the $z$-variable, $f(x,y)=h(x, i y)$ is a real solution to \eqref{mse} in $\mathbb{E}^3$.
\end{theorem}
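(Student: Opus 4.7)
The plan is to establish both directions by a direct chain-rule verification, invoking the evenness hypothesis only to secure real-valuedness of the transformed function. For the forward direction, I would set $h(y,z)=f(y,iz)$ and compute the five partials appearing in \eqref{BIE}. Each differentiation with respect to $z$ produces a factor of $i$ from the chain rule, so $h_z$ and $h_{yz}$ each carry one factor of $i$, while $h_{zz}$ and $h_z^{\,2}$ each carry $i^2=-1$. Substituting into the LHS of \eqref{BIE} and collecting, one finds that it equals $-1$ times the LHS of \eqref{mse} evaluated at $(y,iz)$, which vanishes because $f$ solves \eqref{mse}. The key sign match is that the coefficient $(1+f_y^{\,2})$ of $f_{xx}$ in \eqref{mse} turns into the coefficient $(h_z^{\,2}-1)$ of $h_{yy}$ in \eqref{BIE}, precisely because of the $i^2$ supplied by $h_z^{\,2}$.

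For real-valuedness of $h$, I would invoke the standing real-analyticity assumption. Expanding $f(x,y)=\sum_{m,n} a_{mn}\, x^m y^n$ with $a_{mn}\in\mathbb{R}$, evenness in $y$ forces $a_{mn}=0$ whenever $n$ is odd, so $h(y,z)=\sum a_{mn}\, y^m (iz)^n$ involves only even powers of $i$ in every surviving monomial and is therefore real. Strictly speaking one first works on a polydisc about the origin on which all series converge; the identity then propagates to the full domain by real-analytic continuation.

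The converse is entirely symmetric: starting from $h$ satisfying \eqref{BIE} and even in $z$, and defining $f(x,y)=h(x,iy)$, the analogous chain-rule substitution rewrites the LHS of \eqref{mse} as $-1$ times the LHS of \eqref{BIE} at $(x,iy)$, while the evenness of $h$ in $z$ forces $f$ to be real by the same power-series argument. The main obstacle is bookkeeping rather than conceptual: one must track the factors of $i$ and sign flips produced by the chain rule and match them against the coefficient-sign differences between \eqref{mse} and \eqref{BIE}. A secondary technical point is that the PDE identity initially lives only in the complexified sense, since $f$ (resp.\ $h$) is being evaluated at an imaginary argument; real-analyticity together with the evenness hypothesis is precisely what lets the identity descend to a genuine real PDE on $\mathbb{R}^2$.
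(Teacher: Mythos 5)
The paper does not prove this theorem; it is quoted verbatim from the cited source \cite{akamine_wick}, so there is no in-paper argument to compare against. Your verification is correct and is the standard one: the chain rule gives $h_y=f_x$, $h_z=if_y$, $h_{yy}=f_{xx}$, $h_{yz}=if_{xy}$, $h_{zz}=-f_{yy}$, so the left-hand side of \eqref{BIE} becomes exactly $-1$ times the (analytically continued) left-hand side of \eqref{mse} at $(y,iz)$, and the evenness-in-$y$ power-series argument, together with real-analytic continuation from a polydisc about the origin, correctly yields real-valuedness; the converse is symmetric as you say.
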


\begin{theorem}\label{odd minimal to born infi}\cite{akamine_wick}
    Let $f(x, y)$ be a solution to \eqref{mse} in $\mathbb{E}^3$, which is odd with respect to the $y$-variable, then $h(y,z)=-i f({z},  i {y})$ is a real solution to \eqref{BIE} in $\mathbb{E}^3_1$. Conversely, for a solution $h({y}, {z})$ to \eqref{BIE} in $\mathbb{E}^3_1$ which is odd with respect to the ${y}$-variable, $f(x,y)=-i h(i y,x)$ is a real solution to \eqref{mse} in $\mathbb{E}^3$.
\end{theorem}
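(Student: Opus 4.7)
The plan is to prove both directions by a direct chain-rule substitution, supplemented by a power-series argument that uses the oddness hypothesis to guarantee real-valuedness. Because the forward and converse statements are structurally symmetric, I would present the forward direction in detail and remark that the converse follows by the same method applied to the inverse substitution.

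Realness of $h(y,z) = -i f(z, iy)$ follows from the assumption that $f$ is real analytic and odd in its second variable: its power series $f(x,y) = \sum c_{m,n} x^m y^n$ has $c_{m,n} = 0$ for $n$ even, so after the substitution $y \mapsto iy$ every surviving term acquires a single factor of $i$ (up to sign), and multiplying by $-i$ cancels this factor uniformly. Convergence on a neighborhood of the origin is inherited from $f$.

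For the PDE verification, I would compute $h_y, h_z, h_{yy}, h_{zz}, h_{yz}$ at $(y,z)$ in terms of the partial derivatives of $f$ at $(z, iy)$ via the chain rule. The key structural point is that $h_z = -i f_1(z, iy)$, so $h_z^{\,2} = -f_1^{\,2}$, which turns the coefficient $(h_z^2 - 1)$ in \eqref{BIE} into $-(1 + f_1^{\,2})$. The remaining $i$-factors from $h_{yy} = i f_{22}$ and $h_{zz} = -i f_{11}$ combine to pull out an overall $-i$ so that the left-hand side of \eqref{BIE} becomes $-i$ times the left-hand side of \eqref{mse} evaluated at $(z, iy)$, which vanishes by hypothesis.

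The converse is handled by the symmetric substitution $f(x,y) = -i h(iy, x)$: oddness of $h$ in its first variable again yields realness by the same parity argument, and an analogous chain-rule calculation converts the expression on the left of \eqref{mse} into a constant multiple of the expression on the left of \eqref{BIE}, now evaluated at $(iy, x)$. The main obstacle is purely bookkeeping — tracking the accumulated factors of $i$ and the sign flip in $(h_z^2 - 1)$ versus $(1 + f_y^2)$ without arithmetic error. The content of the theorem is precisely that the oddness hypothesis is the minimal symmetry needed so that the Wick rotation produces a real function, while the PDE correspondence itself is an identity of differential operators after the complex change of variable.
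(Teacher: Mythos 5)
Your verification is correct: the chain-rule bookkeeping works out exactly as you describe, since $h_z^2=-f_1^2$ flips $(h_z^2-1)$ into $-(1+f_1^2)$ while $h_{yy}=if_{22}$, $h_{zz}=-if_{11}$, $h_{yz}=f_{12}$ make the left side of \eqref{BIE} equal to $-i$ times the left side of \eqref{mse} at $(z,iy)$, and the parity argument on the power series is the standard way to see that oddness forces a single factor of $i$ that the prefactor $-i$ cancels. Note, however, that the paper itself offers no proof of this statement --- it is imported verbatim from \cite{akamine_wick} --- so there is nothing internal to compare against; your self-contained argument is essentially the direct substitution one would find in that reference, with the only point worth making explicit being that the real-analytic $f$ extends holomorphically and the PDE persists under this extension by the identity theorem, so that evaluating it at the complex point $(z,iy)$ is legitimate.
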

Before proceeding further, we introduce the concept of a dilated surface, which has been frequently used in \cite{kamien2001decomposition,dey_finite} but has not been formally defined. Our aim here is to provide a precise definition.

\begin{definition}
    Let \( H(x, y) = \left(x, y, f(x, y)\right) \) be a parametrization of a surface \( S \) defined on a domain in \( \mathbb{R}^2 \). For constants \( k, m_1, m_2, n_1, n_2 \in \mathbb{R} \), we define the surface
    \[
        \tilde{H}(\tilde{x}, \tilde{y}) = \left(\tilde{x}, \tilde{y}, k \cdot f(m_1 \tilde{x} + n_1, m_2 \tilde{y} + n_2)\right)
    \]
   in an appropriate domain to be a dilated version of \( S \), or simply dilated $S$.
\end{definition}
The above definition can be analogously extended by allowing the constants in the expression of $\tilde{H}$ to take complex values instead of real ones. The dilation, although it preserves the topology of the surface locally but changes the geometry. For simplicity of notation, throughout this paper, we adopt the following convention for bilateral infinite sums: for a (real or complex) sequence $\{x_k\}_{k=-\infty}^{\infty}$, we define
\[
\sum_{k=-\infty}^{\infty} x_k := \sum_{k=1}^{\infty} x_{-k} + \sum_{k=0}^{\infty} x_k,
\]
provided that both sums on the right-hand side (RHS) converge.

\subsection{Euler-Ramanujan identity}\cite{berndt2012ramanujan}
For all $a,b \in \mathbb{R}$ we have the following identity:
\begin{equation}\label{ERidentity}
\tan ^{-1}[\tanh a \cot b]=\sum_{k=-\infty}^{\infty} \tan ^{-1}\left(\frac{a}{b+k \pi}\right). 
 \end{equation}

The above identity \eqref{ERidentity} can also be extended in a suitable domain $(a,b)\in\mathbb C^2$, for which $\tan^{-1}$ is a well-defined analytic function.


\section{Infinite decomposition of Scherk's type surfaces}\label{Infinite decomposition}

We start this section with the following solutions to the minimal surface equation \eqref{mse} as given in \cite{kamien2001decomposition}:

\begin{equation}\label{familyscherk}
    \phi(x,y;\theta)=-\sec{\frac{\theta}{2}}	\tan^{-1}\left(\dfrac{\tanh (\frac{x\sin \theta}{2})}{\tan(y\sin\frac{\theta}{2})}\right).
    \end{equation}
This gives a family of \textit{Scherk's minimal surfaces} for $\theta \in (0,\frac{\pi}{2})$. Note that in the limit as $\theta \rightarrow 0$, the height functions of this family of Scherk's minimal surfaces converge to the height function of the helicoid. Indeed,
\begin{equation}\label{limit1}
    \lim_{\theta \rightarrow 0} \phi(x,y;\theta)  = - \lim_{\theta \rightarrow 0}  \sec\left(\frac{\theta}{2} \right) \tan^{-1} \left( \frac{\theta x/2}{\theta y/2}\right) 
    = - \tan^{-1} \left( \frac{ x}{y}\right) 
    = -\epsilon_{x,y} \, \frac{\pi}{2} + \tan^{-1}\left(\frac{y}{x} \right),
 \end{equation}   
 where $\epsilon_{x,y}$ is +1 if $x/y >0$ and $\epsilon_{x,y}$ is -1 if $x/y <0$.\\
It is easy to check that $\phi(x,y;\theta)$ is odd in both the variables, and therefore, from Theorem \ref{minimal to maximal}, we get a family of \textit{Scherk's maximal surfaces} given by  

\begin{equation}\label{familyscherkmaximal}
    \psi(x,y;\theta):=-\phi(ix,iy;\theta)=\sec{\frac{\theta}{2}}	\tan^{-1}\left(\dfrac{\tan (\frac{x\sin \theta}{2})}{\tanh(y\sin\frac{\theta}{2})}\right).
    \end{equation}

 In other words, $\psi(x,y;\theta)$ gives a one-parameter family of maximal surfaces around some neighbourhood of the origin. Similarly, from Theorem \ref{odd minimal to born infi}, we get a one-parameter family of solutions of the Born-Infeld equation, given by  

\begin{equation}\label{familystimelike}
    \chi(y,z;\theta):=-i\phi(z,iy;\theta)=\sec{\frac{\theta}{2}}	\tanh^{-1}\left(\dfrac{\tanh (\frac{z\sin \theta}{2})}{\tanh(y\sin\frac{\theta}{2})}\right).
    \end{equation}

We will call this family \textit{Scherk's type Born-Infeld solitons}. It is important to note that the solutions obtained are all real-valued. For these two families, we have the limits as follows:
\begin{align}
 \label{limit2}  \lim_{\theta \rightarrow 0} \psi(x,y;\theta)  &= \tan^{-1} \left( \frac{ x}{y}\right) , \\
   \lim_{\theta \rightarrow 0} \chi(y,z;\theta)  &=  \tanh^{-1} \left( \frac{z}{y}\right) \label{limit3}.
\end{align}

The limiting surface in equation \eqref{limit3}, is known as the \textit{hyperbolic helicoid}. Using the Euler-Ramanujan identity \eqref{ERidentity}, we now present an infinite decomposition result for the one-parameter family of Scherk's maximal surfaces, expressing them as a superposition of dilated helicoids:

\begin{theorem}
    The height functions of the one-parameter family of Scherk's maximal surfaces can be decomposed as an infinite superposition of dilated helicoids as follows:
    \begin{equation}
        \label{decompositionmaximal}
       \psi(x,y;\theta)=\sec{\frac{\theta}{2}}	\left[\frac{\pi}{2}-\sum_{n =-\infty}^{\infty}\tan^{-1}\left(\dfrac{y}{x\cos\frac{\theta}{2}-ns}\right)\right],
    \end{equation}
    where $\sin \frac{\theta}{2}=\frac{\pi}{s}$ and $\dfrac{\tan (\frac{x\sin \theta}{2})}{\tanh(y\sin\frac{\theta}{2})}>0$.
\end{theorem}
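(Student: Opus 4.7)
The plan is to massage the expression $\psi(x,y;\theta)$ into the left-hand side of the Euler--Ramanujan identity \eqref{ERidentity} and then read off the decomposition.

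First I would take the defining formula
$$
\psi(x,y;\theta)=\sec\tfrac{\theta}{2}\,\tan^{-1}\!\left(\frac{\tan(\tfrac{x\sin\theta}{2})}{\tanh(y\sin\tfrac{\theta}{2})}\right)
$$
and invert the argument of $\tan^{-1}$ using $\tan^{-1}(X)=\tfrac{\pi}{2}-\tan^{-1}(1/X)$, which is valid precisely when $X>0$; this is exactly the hypothesis $\frac{\tan(x\sin\theta/2)}{\tanh(y\sin\theta/2)}>0$. Using $\sin\theta=2\sin\tfrac{\theta}{2}\cos\tfrac{\theta}{2}$, this rewrites $\psi$ as
$$
\psi(x,y;\theta)=\sec\tfrac{\theta}{2}\left[\frac{\pi}{2}-\tan^{-1}\!\Bigl(\tanh(y\sin\tfrac{\theta}{2})\,\cot(x\sin\tfrac{\theta}{2}\cos\tfrac{\theta}{2})\Bigr)\right].
$$

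Next I would apply the Euler--Ramanujan identity \eqref{ERidentity} with the choices
$$
a=y\sin\tfrac{\theta}{2},\qquad b=x\sin\tfrac{\theta}{2}\cos\tfrac{\theta}{2},
$$
which gives
$$
\tan^{-1}\!\Bigl(\tanh(y\sin\tfrac{\theta}{2})\cot(x\sin\tfrac{\theta}{2}\cos\tfrac{\theta}{2})\Bigr)=\sum_{k=-\infty}^{\infty}\tan^{-1}\!\left(\frac{y\sin\tfrac{\theta}{2}}{x\sin\tfrac{\theta}{2}\cos\tfrac{\theta}{2}+k\pi}\right).
$$
Cancelling the common factor $\sin\tfrac{\theta}{2}$ from numerator and denominator of each summand and using $s=\pi/\sin\tfrac{\theta}{2}$, the $k$-th term becomes $\tan^{-1}\!\bigl(y/(x\cos\tfrac{\theta}{2}+ks)\bigr)$. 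Re-indexing $n=-k$ flips the sign and yields the bilateral sum in the statement.

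Finally, substituting this sum back and multiplying through by $\sec\tfrac{\theta}{2}$ gives \eqref{decompositionmaximal}. The only delicate point, and hence the main obstacle I anticipate, is justifying the reciprocal step and the rearrangement in the bilateral sum: one must verify that the ratio being positive lets us use $\tan^{-1}(1/X)=\tfrac{\pi}{2}-\tan^{-1}(X)$ without a $\pm\pi$ branch correction, and that the convention for $\sum_{k=-\infty}^{\infty}$ adopted earlier in Section~\ref{Preliminaries} makes the re-indexing $k\mapsto -k$ legal, which is exactly where the symmetric convergence in the Euler--Ramanujan identity is used.
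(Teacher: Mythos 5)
Your proposal is correct and follows essentially the same route as the paper: apply $\tan^{-1}(X)=\tfrac{\pi}{2}-\tan^{-1}(1/X)$ (justified by the stated positivity hypothesis), invoke the Euler--Ramanujan identity \eqref{ERidentity} with $a=y\sin\tfrac{\theta}{2}$ and $b=x\sin\tfrac{\theta}{2}\cos\tfrac{\theta}{2}$, cancel $\sin\tfrac{\theta}{2}$ to introduce $s$, and re-index $n\mapsto -n$ over $\mathbb{Z}$. You are in fact slightly more careful than the paper in flagging the branch issue for the reciprocal identity and the legality of the re-indexing under the bilateral-sum convention.
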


\begin{proof}
First, we see 
\begin{align*}
    \tan^{-1}\left(\dfrac{\tan (\frac{x\sin \theta}{2})}{\tanh(y\sin\frac{\theta}{2})}\right) 
    &= \frac{\pi}{2}-\tan^{-1}\left(\dfrac{\tanh(y\sin\frac{\theta}{2})}{\tan (\frac{x\sin \theta}{2})}\right)\\
    &=\frac{\pi}{2}-\sum_{n=-\infty}^{\infty}\tan^{-1}\left(\dfrac{y\sin\frac{\theta}{2}}{\frac{x}{2}\sin \theta+n\pi}\right), \quad \text{using \eqref{ERidentity}}\\
    &=\frac{\pi}{2}-\sum_{n=-\infty}^{\infty}\tan^{-1}\left(\dfrac{y}{x\cos \frac{\theta}{2}+ns}\right), 
   \end{align*}
where $\sin \frac{\theta}{2}=\frac{\pi}{s}$.\\

By changing the summation index \( n \to -n \) and observing that the summation ranges over all integers \( n \in \mathbb{Z} \), we obtain:

\begin{align*}
   \psi(x,y;\theta)& = \sec{\frac{\theta}{2}}	\tan^{-1}\left(\dfrac{\tan (\frac{x\sin \theta}{2})}{\tanh(y\sin\frac{\theta}{2})}\right)\\
   &=\sec{\frac{\theta}{2}}	\left[\frac{\pi}{2}-\sum_{n =-\infty}^{\infty}\tan^{-1}\left(\dfrac{y}{x\cos\frac{\theta}{2}-ns}\right)\right].
\end{align*}

   Hence, the claim. 
\end{proof}

The next theorem shows that the height function in \eqref{familystimelike} can also be decomposed into a superposition of dilated hyperbolic helicoids:

\begin{theorem}
  The height functions of Scherk's type Born-Infeld solitons can be decomposed as an infinite superposition of dilated hyperbolic helicoids as follows:
    \begin{equation}
        \label{decompositiontimelike}
       \chi(y,z;\theta)=\sec{\frac{\theta}{2}}\sum_{n =-\infty}^{\infty}\tanh^{-1}\left(\dfrac{z\cos\frac{\theta}{2}}{y-ns}\right),
    \end{equation}
    where $\sin \frac{\theta}{2}=\frac{i\pi}{s}$.  
\end{theorem}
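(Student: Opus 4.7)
The plan is to mirror the structure of the proof of the preceding theorem for Scherk's maximal surfaces, but now applying the Euler--Ramanujan identity \eqref{ERidentity} (extended to complex arguments, as noted in Section~\ref{Preliminaries}) to a $\tanh^{-1}$ of a ratio of two $\tanh$'s. The strategy is to convert the inner expression of $\chi(y,z;\theta)$ into the form $\tan^{-1}(\tanh a \cot b)$ appearing on the left-hand side of \eqref{ERidentity}, apply that identity, and then convert back to a real $\tanh^{-1}$-sum by exploiting the specific complex value $\sin\frac{\theta}{2} = \frac{i\pi}{s}$.

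First I would use the elementary identities $\tanh w = -i\tan(iw)$, $\coth w = i\cot(iw)$, and $\tanh^{-1} w = -i\tan^{-1}(iw)$ to rewrite
$$\tanh^{-1}\!\left(\frac{\tanh(\tfrac{z\sin\theta}{2})}{\tanh(y\sin\tfrac{\theta}{2})}\right) = i\,\tan^{-1}\!\left(\tanh\!\left(\tfrac{z\sin\theta}{2}\right)\cot\!\left(iy\sin\tfrac{\theta}{2}\right)\right).$$
With the choices $a=\frac{z\sin\theta}{2}$ and $b=iy\sin\frac{\theta}{2}$, the right-hand side is exactly of the form required by \eqref{ERidentity}, so the identity produces an infinite sum of terms $\tan^{-1}\!\bigl(a/(b+n\pi)\bigr)$.

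Next I would substitute $\sin\frac{\theta}{2}=\frac{i\pi}{s}$ and use $\sin\theta=2\sin\frac{\theta}{2}\cos\frac{\theta}{2}$ to simplify each summand. The denominator $iy\sin\frac{\theta}{2}+n\pi$ collapses to a real multiple of $(ns-y)$, while the numerator becomes a purely imaginary multiple of $z\cos\frac{\theta}{2}$, so that each summand takes the form $\tan^{-1}\!\bigl(-iz\cos\frac{\theta}{2}/(y-ns)\bigr)$. Applying $\tan^{-1}(-iw)=-i\tanh^{-1}w$ then converts every term back to a $\tanh^{-1}$, and the overall factor of $i$ from the initial conversion cancels the $-i$'s coming from the inner conversion. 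After multiplying by $\sec\frac{\theta}{2}$, this produces the claimed decomposition.

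The main obstacle, I expect, is the careful bookkeeping of all factors of $i$ so that they cancel exactly and the final sum is manifestly real, together with pinning down branches of $\tan^{-1}$ and $\tanh^{-1}$ so that the identities used hold on the intended domain of $(y,z)$. A secondary concern is verifying that the complex extension of \eqref{ERidentity} is legitimate at the purely imaginary value $b=iy\sin\frac{\theta}{2}$; since $\sin\frac{\theta}{2}=\frac{i\pi}{s}$ makes $b$ real, this reduces to checking convergence of the resulting tail of $\tanh^{-1}\!\bigl(z\cos\frac{\theta}{2}/(y-ns)\bigr)$, which is clear for $|y|$ bounded away from the lattice $\{ns\}$ and $|z\cos\frac{\theta}{2}|<|y-ns|$ eventually in $n$.
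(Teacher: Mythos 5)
Your proposal is correct and follows essentially the same route as the paper's proof: both apply the Euler--Ramanujan identity with $a=\frac{z\sin\theta}{2}$, $b=iy\sin\frac{\theta}{2}$, and then use the $\tan^{-1}$/$\tanh^{-1}$ conversions via factors of $i$ together with $\sin\frac{\theta}{2}=\frac{i\pi}{s}$ to render the summands real. The only cosmetic difference is that the paper starts from $\chi=-i\phi(z,iy;\theta)$ and carries the factor of $-i$ outside while you convert the $\tanh^{-1}$ expression directly; the underlying computation is identical.
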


\begin{proof}
Firstly, we have
\begin{align*}
    \tan^{-1}\left(\dfrac{\tanh (\frac{z\sin \theta}{2})}{\tan(iy\sin\frac{\theta}{2})}\right) 
    &=\sum_{n=-\infty}^{\infty}\tan^{-1}\left(\dfrac{\frac{z\sin\theta}{2}}{iy\sin \frac{\theta}{2}+n\pi}\right), \quad \text{using \eqref{ERidentity}}\\
    &=-i\sum_{n=-\infty}^{\infty}\tanh^{-1}\left(\dfrac{\frac{z\sin\theta}{2}}{y\sin \frac{\theta}{2}-in\pi}\right).
\end{align*}
Therefore,
\begin{align*}
    \chi(y,z;\theta) &= -i\phi(z,iy;\theta) \\
    &= -i\left(-\sec{\frac{\theta}{2}}\right) \tan^{-1}\left(\dfrac{\tanh (\frac{z\sin \theta}{2})}{\tan(iy\sin\frac{\theta}{2})}\right)\\
    &= \sec \frac{\theta}{2} \sum_{n=-\infty}^{\infty} \tanh^{-1}\left(\dfrac{\frac{z\sin\theta}{2}}{y\sin \frac{\theta}{2}-in\pi}\right), \quad \text{using \eqref{ERidentity}}\\
    &= \sec \frac{\theta}{2} \sum_{n=-\infty}^{\infty} \tanh^{-1}\left(\dfrac{z\cos\frac{\theta}{2}}{y-in\pi\csc \frac{\theta}{2}}\right).
\end{align*}
 Hence, the claim. 
\end{proof}
\begin{remark}
R.~Kamien, in \cite{kamien2001decomposition}, observed that Scherk’s minimal surfaces can be interpreted as an infinite superposition of ``topological defects'', which correspond to the limiting helicoids described in equation~\eqref{limit1}. Analogously, in the preceding two theorems, we find that the families represented by $\psi$ and $\chi$ can also be understood as infinite superpositions of their respective limits, given by equations~\eqref{limit2} and~\eqref{limit3}.
     In equation \eqref{decompositiontimelike}, the left-hand side (LHS) is real-valued, whereas the right-hand side (RHS) is expressed in terms of complex-valued functions. Therefore, 
    \[
    \sum_{n = -\infty}^{\infty}
    \operatorname{Im}\left[ \tanh^{-1}\left( \frac{z \cos\frac{\theta}{2}}{y - ns} \right) \right] = 0.
    \]
    Consequently, although complex functions appear in the intermediate steps, the real height function $\chi(y, z; \theta)$ ultimately decomposes into a sum of real-valued height functions. By taking the real part of both sides of \eqref{decompositiontimelike}, we obtain the following identity:
\begin{align*}
\chi(y, z; \theta) 
&= \sec\left( \frac{\theta}{2} \right) \sum_{n = -\infty}^{\infty} 
\operatorname{Re}\left[ \tanh^{-1}\left( \frac{z \cos\frac{\theta}{2}}{y - ns} \right) \right] \\
&= \frac{1}{2} \sec\left( \frac{\theta}{2} \right) 
\sum_{n = -\infty}^{\infty} \log \left| \frac{ y - ns + z \cos\frac{\theta}{2} }{ y - ns - z \cos\frac{\theta}{2} } \right|.
\end{align*}
\end{remark}

\section{Finite decomposition of Scherk's type surfaces}\label{Finite decomposition}


R.~Kamien, in \cite{kamien2001decomposition}, expressed a family of Scherk’s minimal surfaces as a finite sum of dilated copies of itself. Inspired by this idea, R.~Dey et al.\ \cite{dey_finite} investigated a similar decomposition of classical Scherk's minimal surface~\eqref{Scherk minimal surface}. Their approach employed a different Euler–Ramanujan identity (not the one used in our work, cf.\ \eqref{ERidentity}), leading to a finite decomposition of the height function in terms of ``translated helicoids’’ and ``scaled and translated Scherk’s first surfaces’’. Our results are more closely aligned with Kamien’s original work and give decomposition of Scherk’s minimal surfaces and the surfaces obtained by it's Wick rotations (i.e., $\phi,\psi \text{ and } \chi$). Moreover, Dey’s work gives the theory of solutions of zero mean curvature equations \eqref{mse}, \eqref{MaSE}, and \eqref{BIE} that split into a finite sum of dilated versions of the same. But, our result (Theorem \ref{theorem finite decomposition}) obtained in this section highlights that  \( \phi \),  \( \psi \), and  \( \chi \) can be decomposed into finite sums of dilated versions of one another. Related work appears in \cite{paul2024superposition}, where the authors provide sufficient conditions for finite sums of parametrized maximal surfaces via their Weierstrass-Enneper representation. \par
Now, rewriting the RHS expression of \eqref{ERidentity}, we get the following finite sum of infinite series: for any positive integer $n$,
 \begin{align}\label{NEWERidentity1}
\sum_{k=-\infty}^{\infty} \tan ^{-1}\left(\frac{a}{b+k \pi}\right)&= \sum_{m=0}^{n-1}\sum_{j=-\infty}^{\infty}\tan^{-1}\left(\frac{a}{b+(nj+m)\pi}\right)\\\label{NEWERidentity2}
&=\sum_{m=0}^{n-1}\tan^{-1}\left(\tanh\left(\frac{a}{n}\right)\cot\left(\frac{b+m\pi}{n}\right)\right).
 \end{align}

In the following theorem, we make use of the identity \( \tan^{-1}x + \tan^{-1}\left(\frac{1}{x}\right) = \frac{\pi}{2} \) for \( x > 0 \). Accordingly, we restrict the domain of the functions \( \phi, \psi, \chi \) to ensure the validity of this identity, without loss of generality. In certain cases of the following theorem, we can see dilated forms of \( \phi \) and \( \psi \) after interchanging the variables \( x \) and \( y \), which preserves their nature as minimal and maximal surfaces, respectively.

\begin{theorem}\label{theorem finite decomposition}
    We have the following finite decompositions:
    \begin{enumerate}
        \item $\psi(\sec \beta.x,y;2\beta)=(1-n)\frac{\pi}{2}\sec\beta+\frac{\cos \tilde{\beta}}{\cos \beta}.\sum_{m=0}^{n-1}\psi(\frac{2x}{n}+\frac{2m\pi\csc2\tilde{\beta}}{n},2y\cos2\tilde{\beta};2\tilde{\beta})$, \quad{where $\sin2\tilde{\beta}=\frac{\sin \beta}{n}$}.
        \item $\psi(\sec \beta\sec 2\beta.x,y;4\beta)=\frac{\pi}{2}\sec 2\beta+\frac{\cos \beta}{\cos 2\beta}.\sum_{m=0}^{n-1}\phi(\frac{2y}{n},\frac{2x}{n}+\frac{m\pi}{n}\csc \beta;2\beta)$.
        \item $\phi(\sec \beta\sec 2\beta.x,y;4\beta)=-\frac{n\pi}{2}\sec 2\beta+\frac{\cos \beta}{\cos 2\beta}.\sum_{m=0}^{n-1}\psi(\frac{2y}{n}+\frac{2m\pi}{n}\csc 2\beta,\frac{2x}{n};2\beta)$.
        \item $\chi(y,z;2\beta)=\sum_{m=0}^{n-1}\chi\left(\frac{y}{n}-i\frac{m\pi}{n}\csc\beta,\frac{z}{n};2\beta\right)$.
    \end{enumerate}
\end{theorem}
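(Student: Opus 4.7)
The common engine behind all four parts is the refined Euler-Ramanujan identity \eqref{NEWERidentity2}, which splits a single $\tan^{-1}(\tanh a\cot b)$ into the $n$-term sum $\sum_{m=0}^{n-1}\tan^{-1}(\tanh(a/n)\cot((b+m\pi)/n))$. My general strategy in each case is the following: (i) insert the prescribed scalings into the definition of $\psi$, $\phi$, or $\chi$ so that the inner argument takes the form $\tan(\,\cdot\,)/\tanh(\,\cdot\,)$ or $\tanh(\,\cdot\,)/\tan(\,\cdot\,)$; (ii) where needed, swing this ratio into the $\tanh\cdot\cot$ form via the reciprocal identity $\tan^{-1}(\xi)+\tan^{-1}(1/\xi)=\pi/2$, whose validity is precisely the reason for the positivity hypothesis stated before the theorem; (iii) apply \eqref{NEWERidentity2}; and (iv) flip each summand back and read off the dilated arguments of a $\psi$, $\phi$, or $\chi$ at the new angular parameter.

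For items (2) and (3) this goes through cleanly. Starting from $\psi$ or $\phi$ at parameter $4\beta$ with the scaling $\sec\beta\sec 2\beta\cdot x$, step (i) reduces the LHS to a single arctangent of $\tan(2x\sin\beta)/\tanh(2y\sin\beta\cos\beta)$ or of $\tanh(2x\sin\beta)/\tan(2y\sin\beta\cos\beta)$; a single reciprocal flip then puts it in the $\tanh(a)\cot(b)$ form with $a=y\sin 2\beta$ and $b=2x\sin\beta$, to which \eqref{NEWERidentity2} applies directly. Each resulting summand, of the form $\tan^{-1}(\tanh(y\sin 2\beta/n)\cot((2x\sin\beta+m\pi)/n))$, is a scalar multiple of $\phi$ (in case (2)) or $\psi$ (in case (3)) at parameter $2\beta$ with exactly the arguments claimed. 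The prefactor $\cos\beta/\cos 2\beta$ comes from comparing the two $\sec$ factors; the additive constants $\frac{\pi}{2}\sec 2\beta$ and $-\frac{n\pi}{2}\sec 2\beta$ come from the $\pi/2$ picked up in the initial reciprocal flip and depend on whether we end in the $\tanh(\cdot)\cot(\cdot)$ form (no further flips, just one $\pi/2$) or in the $\tan(\cdot)/\tanh(\cdot)$ form ($n$ further flips, giving $-n\pi/2$). Item (1) is the same plan applied to $\psi(\,\cdot\,;2\beta)$, with one flip before and $n$ flips after \eqref{NEWERidentity2}, whose combined $\pi/2$-contributions yield the stated constant $(1-n)\frac{\pi}{2}\sec\beta$.

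Item (4) needs one extra move because $\chi$ is built from $\tanh^{-1}$ rather than $\tan^{-1}$. The identities $\tanh^{-1}(w)=-i\tan^{-1}(iw)$ and $\cot(iw)=-i\coth(w)$ rewrite $\chi(y,z;2\beta)$ as an imaginary multiple of $\tan^{-1}(\tanh(z\sin\beta\cos\beta)\cot(iy\sin\beta))$, bringing it into the $\tanh(a)\cot(b)$ form with $b$ purely imaginary. Applying \eqref{NEWERidentity2} in the extended complex range noted in Section~\ref{Preliminaries}, the shift $b\mapsto(b+m\pi)/n$ becomes $iy\sin\beta\mapsto(iy\sin\beta+m\pi)/n=i(y-im\pi\csc\beta)\sin\beta/n$; pushing the result back through the same pair of identities yields $\tanh^{-1}(\tanh((z/n)\sin\beta\cos\beta)/\tanh((y/n-im\pi\csc\beta/n)\sin\beta))$, which is exactly $\cos\beta\cdot\chi(y/n-im\pi\csc\beta/n,\,z/n;\,2\beta)$. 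The outer $\sec\beta$ on the LHS cancels this $\cos\beta$, leaving the prefactor-free decomposition claimed.

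The main obstacle I anticipate is the bookkeeping of branches. The reciprocal identity $\tan^{-1}(\xi)+\tan^{-1}(1/\xi)=\pi/2$ switches sign depending on whether $\xi$ is positive, which is the sole reason for the positivity restriction imposed on the domain of $\phi,\psi,\chi$ in the preamble to the theorem; without it, the $(1-n)\frac{\pi}{2}$ and $\pm\frac{n\pi}{2}$ constants would flip unpredictably. For item (4) there is the additional subtlety that \eqref{NEWERidentity2} must be used along the strip $b\in i\mathbb{R}$, where the bilateral convergence convention fixed in Section~\ref{Preliminaries} is needed to ensure that the complex-valued summands still collapse to the real-valued $\chi$. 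Once these branch and convergence issues are dispatched, each of the four identities reduces to a direct substitution check via \eqref{NEWERidentity2}.
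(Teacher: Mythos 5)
Your proposal is correct and follows essentially the same route as the paper's proof: a reciprocal flip $\tan^{-1}\xi+\tan^{-1}(1/\xi)=\pi/2$ where needed, the refined identity \eqref{NEWERidentity2} applied to the $\tanh(a)\cot(b)$ form, and identification of each summand as a dilated $\phi$, $\psi$ or $\chi$, with the complex extension of the identity (via $\tanh^{-1}w=-i\tan^{-1}(iw)$) handling part (4) exactly as the paper does through $\chi=-i\phi(z,iy;\theta)$. The only slip is cosmetic: in part (3) the function $\phi$ is already in the $\tan^{-1}(\tanh a\cot b)$ form, so there is no initial flip there --- the constant $-\frac{n\pi}{2}\sec 2\beta$ comes entirely from the $n$ post-identity flips multiplied by the $-\sec 2\beta$ prefactor, which is what your final tally records anyway.
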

\begin{proof}
    \begin{enumerate}
    
   \item From equation $\eqref{familyscherkmaximal}$, we have
   \begin{align*} 
       \psi(x,y;\theta)&=\sec{\frac{\theta}{2}}	\tan^{-1}\left(\dfrac{\tan (\frac{x\sin \theta}{2})}{\tanh(y\sin\frac{\theta}{2})}\right)\\
        &=\sec{\frac{\theta}{2}}\left[\frac{\pi}{2}-\sum_{n=-\infty}^{\infty}\tan^{-1}\left(\dfrac{y\sin\frac{\theta}{2}}{\frac{x}{2}\sin \theta+n\pi}\right)\right], \quad \text{using \eqref{ERidentity}}.
          \end{align*}
        Substituting $\theta=2\beta$ and using the identity \eqref{NEWERidentity2} in the above expression we get,
        \begin{align*}
            \cos\beta .\psi(x,y;2\beta)&= \frac{\pi}{2}-\sum_{m=0}^{n-1}\tan^{-1}\left(\dfrac{\tanh(\frac{y\sin\beta}{n})}{\tan (\frac{x\sin \beta\cos\beta+m\pi}{n})}\right)\\
            &=\frac{\pi}{2}-\sum_{m=0}^{n-1}\tan^{-1}\left(\dfrac{\tanh(y\sin2\tilde{\beta})}{\tan (\frac{x\sin 2\tilde{\beta}\cos\beta+m\pi}{n})}\right), \text{where } \sin 2\tilde{\beta} = \frac{\sin \beta}{n}\\
            &= \frac{\pi}{2} - \sum_{m=0}^{n-1} \left[ \frac{\pi}{2} - \tan^{-1} \left( \frac{\tan ( \frac{x \sin 2\tilde{\beta} \cos\beta + m\pi}{n} }{\tanh(y \sin 2\tilde{\beta})} \right) \right]\\
            &=(1-n)\frac{\pi}{2}+\cos\tilde{\beta}\sum_{m=0}^{n-1}\psi\left(\frac{2x\cos\beta}{n}+\frac{2m\pi\csc2\tilde{\beta}}{n},2y\cos\tilde{\beta};2\tilde{\beta}\right).
 \end{align*}
 Thus, we have established the first part of the theorem.

   \vspace{1cm}
    \item Again, starting from equation $\eqref{familyscherkmaximal}$, we have
    \begin{align*}
       \psi(x,y;\theta)&=\sec{\frac{\theta}{2}}	\tan^{-1}\left(\dfrac{\tan (\frac{x\sin \theta}{2})}{\tanh(y\sin\frac{\theta}{2})}\right)\\
        &=\sec{\frac{\theta}{2}}\left[\frac{\pi}{2}-\sum_{n=-\infty}^{\infty}\tan^{-1}\left(\dfrac{y\sin\frac{\theta}{2}}{\frac{x}{2}\sin \theta+n\pi}\right)\right], \quad \text{using \eqref{ERidentity}}.
          \end{align*}
    Now
    \begin{align*}
        \sum_{k=-\infty}^{\infty}\tan^{-1}\left(\dfrac{y\sin\frac{\theta}{2}}{\frac{x}{2}\sin \theta+k\pi}\right)&=\sum_{m=0}^{n-1}\tan^{-1}\left(\dfrac{\tanh(\frac{y}{n}\sin\frac{\theta}{2})}{\tan\left(\frac{x}{2n}\sin \theta+\frac{m\pi}{n}\right)}\right), \quad \text{using \eqref{NEWERidentity2}}\\
        &=-\cos \frac{\theta}{4}\sum_{m=0}^{n-1}\phi\left(\frac{2y}{n},\frac{2x}{n}\cos\frac{\theta}{4}\cos\frac{\theta}{2}+\frac{m\pi}{n}\csc\frac{\theta}{4};\frac{\theta}{2}\right).
    \end{align*}
    Therefore 
    \begin{align*}
        \psi(x,y;\theta)&=\sec\frac{\theta}{2}\left[\frac{\pi}{2}+\cos \frac{\theta}{4}\sum_{m=0}^{n-1}\phi\left(\frac{2y}{n},\frac{2x}{n}\cos\frac{\theta}{4}\cos\frac{\theta}{2}+\frac{m\pi}{n}\csc\frac{\theta}{4};\frac{\theta}{2}\right)\right].
    \end{align*}
Now, substituting $x$ by $\sec\beta\sec2\beta.x$ and $\theta$ by $4\beta$ in the above expression, we establish the second part of the theorem.
    
 \vspace{1cm}
    \item From equation $\eqref{familyscherk}$, we have
    \begin{align*}
         \phi(x,y;\theta)&=-\sec{\frac{\theta}{2}}	\tan^{-1}\left(\dfrac{\tanh (\frac{x\sin \theta}{2})}{\tan(y\sin\frac{\theta}{2})}\right).
    \end{align*}
    Now
    \begin{align*}
        \tan^{-1}\left(\dfrac{\tanh (\frac{x\sin \theta}{2})}{\tan(y\sin\frac{\theta}{2})}\right)&=\sum_{k=-\infty}^{\infty}\tan^{-1}\left(\dfrac{\frac{x}{2}\sin\theta}{y\sin\frac{\theta}{2}+k\pi}\right), \quad \text{using \eqref{ERidentity}}\\
        &=\sum_{m=0}^{n-1}\tan^{-1}\left(\dfrac{\tanh\left(\frac{x}{2n}\sin\theta\right)}{\tan\left(\frac{y}{n}\sin\frac{\theta}{2}+\frac{m\pi}{n}\right)}\right), \quad \text{using \eqref{NEWERidentity2}}\\
        &=\sum_{m=0}^{n-1}\left[\frac{\pi}{2}-\tan^{-1}\left(\dfrac{\tan\left(\frac{y}{n}\sin\frac{\theta}{2}+\frac{m\pi}{n}\right)}{\tanh\left(\frac{x}{2n}\sin\theta\right)}\right)\right]\\
        &=\frac{n\pi}{2}-\cos\frac{\theta}{4}\sum_{m=0}^{n-1}\psi\left(\frac{2y}{n}+\frac{2m\pi\csc\frac{\theta}{2}}{n}, \frac{2x}{n}\cos\frac{\theta}{2}\cos\frac{\theta}{4};\frac{\theta}{2}\right).
    \end{align*}
    
    Therefore
    \begin{equation*}
        \phi(x,y;\theta)=-\sec\frac{\theta}{2}\left[\frac{n\pi}{2}-\cos\frac{\theta}{4}\sum_{m=0}^{n-1}\psi\left(\frac{2y}{n}+\frac{2m\pi\csc\frac{\theta}{2}}{n}, \frac{2x}{n}\cos\frac{\theta}{2}\cos\frac{\theta}{4};\frac{\theta}{2}\right)\right].
    \end{equation*}
    Now, substituting $x$ by $\sec\beta\sec2\beta.x$ and $\theta$ by $4\beta$ in the above expression, we get the third part of the theorem.
    
     \vspace{1cm}
    \item From equation $\eqref{familystimelike}$, we have
    \begin{align*}
    \chi(y,z;\theta)=-i\phi(z,iy;\theta)&=i\sec{\frac{\theta}{2}}	\tan^{-1}\left(\dfrac{\tanh (\frac{z\sin \theta}{2})}{\tan(iy\sin\frac{\theta}{2})}\right)\\
    &=i\sec{\frac{\theta}{2}}	\sum_{m=0}^{n-1}\tan^{-1}\left(\dfrac{\tanh (\frac{z\sin \theta}{2n})}{\tan\left(\frac{iy\sin\frac{\theta}{2}+m\pi}{n}\right)}\right),\quad \text{using \eqref{ERidentity}, \eqref{NEWERidentity2}}\\
    &=\sum_{m=0}^{n-1}\chi\left(\frac{y}{n}-i\frac{m\pi}{n}\csc\frac{\theta}{2},\frac{z}{n};\theta\right).
    \end{align*}
    So, by substituting $\theta$ by $2\beta$ in the above expression, we are done.
\end{enumerate}
\end{proof}

It can also be observed that the height function \( \chi \) admits a similar finite decomposition in terms of dilated copies of \( \phi \) and \( \psi \). 

\section{Decomposition of Maximal Surfaces in $\mathbb E_1^4$ into Untrapped and $\Asterisk$-Dilated Maximal Surfaces}\label{appl}

    In this section, we will present some decompositions of the maximal surfaces by dilated maximal surfaces (which happen to be weakly untrapped and $\Asterisk$-surfaces) in $\mathbb{E}_1^4:=(\mathbb{R}^4,ds^2:=dx^2+dy^2+dz^2-dw^2)$.\\
    Along any spacelike, codimension 2 surface in $\mathbb{E}_1^4$, there exist exactly two independent, future-pointing (i.e., their timelike components will be positive), lightlike normal vector fields (upto scaling by a constant) $\bm{\nu}_1$ and $\bm{\nu}_2$ (section 2.1.1, \cite{senovilla2012trapped}). Then its mean curvature vector $\bm{H}=k_1\bm{\nu}_1+k_2\bm{\nu}_2$ where $k_1$ and $k_2$ are the components of the trace of the second fundamental form along $\bm{\nu}_1$ and $\bm{\nu}_2$ respectively.
    \begin{definition}
        A codimension 2 spacelike surface in $\mathbb{E}_1^4$ is called maximal if $k_1=0=k_2$, it is called weakly untrapped if $k_1k_2\leq 0$ and it is called a $\Asterisk$-surface if $k_1k_2\geq 0$ (see table 3 (Miscellaneous surfaces), \cite{senovilla2012trapped}).
    \end{definition}
For more details on the applications of weakly untrapped and $\Asterisk$-surfaces, see \cite{senovilla2012trapped,senovilla2007classification}.\\

    Consider the spacelike surface $F:\mathbb{R}^2\to\mathbb{E}_1^4$ defined by $(x,y)\mapsto (x,y,f(x,y),0)$. We can choose the lightlike normals $\bm{\nu}_1=(f_x,f_y,-1,\sqrt{1+f_x^2+f_y^2})$ and $\bm{\nu}_2=(-f_x,-f_y,1,\sqrt{1+f_x^2+f_y^2})$. Define $\mathbf{g}:=F^*ds^2$. Then it can be verified that;
    \begin{equation}\label{eq5.1}
        \begin{aligned}
            k_1 &=\mathrm{trace}_{\bm{\nu}_1}(I^{-1}II)=\mathbf{g}^{ij}F_{ij}.\bm{\nu}_1\\
            &=(1+f_y^2)f_{xx}+(1+f_x^2)f_{yy}-2f_xf_yf_{xy}=-k_2.
        \end{aligned}
    \end{equation}
    Similarly, the spacelike surface $G(x,y):=(0,y,x,g(x,y))$ into $\mathbb{E}_1^4$ in the domain (where it's spacelike) $\{(x,y)|\hspace{0.05cm}g_x^2+g_y^2<1\}$ has future pointing lightlike normal vector fields $\bm{\tilde{\nu}}_1:=(\sqrt{1-g_y^2-g_x^2},g_y,g_x,1)$ and $\bm{\tilde{\nu}}_2:=(-\sqrt{1-g_y^2-g_x^2},g_y,g_x,1)$. It can be verified for $\tilde{\mathbf{g}}:=G^*ds^2$ that;
    \begin{equation}\label{eq5.2}
        \begin{aligned}
            k_1 &=(1-g_y^2)g_{xx}+(1-g_x^2)g_{yy}+2g_xg_yg_{xy}=k_2.
        \end{aligned}
    \end{equation}
    Likewise, the spacelike surface $H(y,z):=(0,h(y,z),z,y)$ into $\mathbb{E}_1^4$ in the domain (where it's spacelike) $\{(y,z)|\hspace{0.05cm}h_y^2>1+h_z^2\}$ has future pointing lightlike normal vector fields $\bm{\overline{\nu}}_1:=(\sqrt{-1-h_z^2+h_y^2},1,-h_z,h_y)$ and $\bm{\overline{\nu}}_2:=(\sqrt{-1-h_z^2+h_y^2},-1,h_z,-h_y)$. It can be verified for $\overline{\mathbf{g}}:=H^*ds^2$ that;
    \begin{equation}\label{eq5.3}
        \begin{aligned}
            k_1 &=(1+h_z^2)h_{yy}-(1-h_y^2)h_{zz}-2h_yh_zh_{yz}=-k_2.
        \end{aligned}
    \end{equation}
    As a consequence of equations \eqref{eq5.1},\eqref{eq5.2} and \eqref{eq5.3}, we have the following facts:
    \begin{enumerate}
            \item The spacelike surfaces $F$ and $H$ are weakly untrapped and they are (respectively) maximal iff  $f$ and $h$ (respectively) satisfy the equations \eqref{mse} and \eqref{BIE}.
            \item The spacelike surface $G$ is a $\Asterisk$-surface and it is maximal iff $g$ satisfies equation \eqref{MaSE}.
    \end{enumerate}
    
    \begin{remark}\label{remark5.2}
        Now, keeping the consequences noted above in hindsight, let the various decompositions derived in the previous sections be reconsidered. It is easy to see that each of the height functions of the dilated zero mean curvature surfaces (in the RHS of the decomposition identities) which \enquote{sum up} to produce height functions of minimal and maximal surfaces (in the LHS of the decomposition identities) in $\mathbb{E}^3$ and $\mathbb{E}_1^3$, when (spacelike) immersed into $\mathbb{E}_1^4$ through maps of type $F,G$ and $H$ correspond to the functions $f,g$ and $h$ respectively. As a result of this, given any dilated zero mean curvature surface as in the individual summands of the RHS of the decompositions constructed in this paper, if it can be immersed into $\mathbb{E}_1^4$ through $F$ and $H$, its spacelike loci shall constitute weakly untrapped surfaces and if it can be immersed into $\mathbb{E}_1^4$ through $G$, its spacelike loci shall constitute $\Asterisk$-surfaces.\\
       Thus, the decompositions in \cite{kamien2001decomposition} can be seen as the Scherk's minimal surface in $\mathbb{E}^3$, now immersed into $\mathbb{E}_1^4$ as a (spacelike) maximal codimension 2 surface, expressed as an \enquote{infinite sum} of weakly untrapped surfaces (dilated helicoids). Similarly, equation \eqref{decompositionmaximal} expresses the Scherk's maximal surface in $\mathbb{E}_1^3$ as an \enquote{infinite sum} of $\Asterisk$-surfaces (dilated helicoids) in $\mathbb{E}_1^4$ while equation \eqref{decompositiontimelike} expresses the Scherk type Born-Infeld soliton in $\mathbb{E}_1^3$ as an \enquote{infinite sum} of weakly untrapped surfaces (dilated hyperbolic helicoids) in $\mathbb{E}_1^4$.\\
        Likewise, from Theorem \ref{theorem finite decomposition}, the Scherk type surfaces (on the LHS) in $\mathbb{E}^3$ and $\mathbb{E}_1^3$ when suitably immersed into $\mathbb{E}_1^4$ (through the maps $F, G$ and $H$), their spacelike loci can be expressed as \enquote{finite sums} of weakly untrapped and $\Asterisk$-surfaces (dilated Scherk type surfaces).
    \end{remark}
    For the previous Remark \ref{remark5.2}, it is imperative to note that the dilated helicoids and Scherk type surfaces appearing in all our derived decompositions shall (almost) always have non-vanishing mean curvature. Even when the mean curvature does vanish along these surfaces, it must only happen along isolated points or a nowhere dense loci since otherwise, if there did exist regions (open sets) on these surfaces where zero mean curvature condition was satisfied, those regions would necessarily experience no dilation at all, contradicting our setup.\\\\  
    On a side note we also have the following easy generalization for $F$;
    \begin{remark}
        The map $F(x,y):=(x,y,f(x,y),0)$ into the (flat) FLRW spacetime $(\mathbb{R}^4,(a(w))^2(dx^2+dy^2+dz^2)-dw^2)$ is a maximal surface (in $\mathbb{E}_1^4$) iff $f$ satisfies \eqref{mse}. 
    \end{remark}
    \section{Acknowledgement}
  The first and second authors of this paper acknowledge the funding received from UGC, India. The third author acknowledges funding by CSIR-JRF and SRF grants (File no. 09/1023(12774)/2021-EMR-I). The fourth author is partially supported by the MATRICS grant (File No. MTR/2023/000990), which has been sanctioned by the SERB.
\bibliography{ref}
\bibliographystyle{ieeetr}
\end{document}